\newcommand{\rset}{\mathbb{R}}
\newtheorem{assumption}[theorem]{Assumption}
\providecommand{\norm}[1]{\lVert#1\rVert}
\begin{document}

\title{ Higher-order tensor methods for  minimizing difference of convex functions }
\titlerunning{Higher-order tensor methods for DC minimization}

\author{Ion Necoara } 

\institute{     
           Ion Necoara \at
              Automatic Control and Systems Engineering Department,  National University of Science and Technology Politehnica Bucharest,   Spl. Independentei 313, 060042 Bucharest, Romania and Gheorghe Mihoc-Caius Iacob Institute of Mathematical Statistics and Applied Mathematics of the  Romanian Academy, 050711 Bucharest, Romania. \email{ion.necoara@upb.ro.}
}
\date{Received: \today / Accepted:}

\maketitle


\begin{abstract}
Higher-order tensor methods  were recently proposed for minimizing smooth convex and nonconvex functions. Higher-order algorithms accelerate the convergence
of the classical first-order methods  thanks to the higher-order derivatives used in the updates. The purpose of this paper is twofold. Firstly, to show that the higher-order algorithmic framework  can be generalized and successfully applied to  (nonsmooth) difference of convex functions, namely, those that can be expressed as the difference of two smooth convex functions and a possibly nonsmooth convex one. We also provide  examples when the subproblem can be solved efficiently, even globally.  Secondly, to derive a complete convergence analysis for our higher-order difference of convex functions  (HO-DC) algorithm. In particular, we prove that any limit point of the HO-DC iterative
sequence is a critical point of the problem under consideration, the corresponding objective value is monotonically decreasing and the minimum value of the  norms of its subgradients converges globally to zero at a sublinear rate. The  sublinear or linear convergence rates of the iterations are obtained under the Kurdyka-Lojasiewicz property. 

\keywords{DC programming \and higher-order algorithm \and convergence analysis.}
\end{abstract}


\section{Introduction}
In this paper we consider minimizing the difference of convex (DC) functions:
\begin{equation}
\label{eq:problem}
\min_{x \in \rset^n}   F(x):= f(x) + \psi(x) - g(x), 
\end{equation}
where  $\psi$ is proper lower semicontinuous  (possibly nondifferentiable) convex function   on the closed convex domain $\text{dom} \, \psi$, while the functions $f$ and $g$ are convex and $p$ and $q$ times continuously differentiable on  $\text{dom} \, \psi$, respectively, with $p,q$ positive intergers. Obviously, $\text{dom} \, F = \text{dom} \, \psi$.  The class of DC functions is very broad, and it includes many important classes of nonconvex functions, such as twice continuously differentiable functions on compact convex sets and multivariate polynomial functions \cite{AhmHal:18,ArtVuo:20}.  For optimization problem \eqref{eq:problem} the first-order necessary optimality conditions at the point of (local) minimum $x^* \in \text{dom} \, \psi$ can be written as follows \cite{Mor:06}:
\begin{align}
\label{eq:opt1}
0 \in  \nabla f(x^*)  - \nabla g(x^*) + \partial \psi (x^*).
\end{align}

\noindent Several algorithms with convergence guarantees have been developed for solving the problem \eqref{eq:problem}. The most well-known method is the difference of convex functions algorithm (DCA), which, in the simplified form, it linearly approximates the concave part of the objective function, $g$, in  \eqref{eq:problem} at the current point and then minimises the resulting convex approximation to the DC function to find the next iteration, without recourse to a line search \cite{LePha:18,LeHuy:18,TaoLe:97}.  Several  algorithms have been also proposed to accelerate the convergence of DCA. For example,  \cite{ArtFle:18,ArtVuo:20}  propose  an algorithm based on a combination of DCA descent direction with a line search step and convergence  is proved under the  Kurdyka-Lojasiewicz property of the objective function. Another variant is the proximal DCA, which adds a quadratic proximal term to the  objective of the convex optimization subproblem  \cite{AnNam:17,BanBot:18,MouMai:06}.  Note that all these  methods are \textit{first-order algorithms}, and despite their empirical success  to solve difficult optimization problems, their convergence speed is known to be slow.

\medskip 

\noindent A natural way to ensure faster convergence rates is to increase the power of the oracle, i.e., to use higher-order information (derivatives) to build a higher-order (Taylor) model. For example, \cite{NesPol:06} derives the first global convergence rate of cubic regularization of Newton method for unconstrained smooth minimization problems with the hessian Lipschitz continuous (i.e., using second-order oracle).  Higher-order methods have become recently popular due to their performance in dealing with ill conditioning and having fast rates of convergence. However, the main obstacle in the implementation of these (higher-order) methods lies in the complexity of the corresponding model approximation formed by a high-order multidimensional polynomial, which may be difficult to  handle and minimize (see for example \cite{BiGaMa:17,cartis2017}). Nevertheless,  for convex  smooth functions  \cite{Nes:20} proved that a regularized Taylor approximation is also convex provided that the regularization parameter is sufficiently large. This observation opens the door for using higher-order Taylor approximations to different structured problems (see, for example, \cite{DoiNes:21,GraNes:20,NecDan:20,NabNec:22}). \textit{However, to the best of our  knowledge, there are not yet methods for minimizing  the difference of convex functions of the form \eqref{eq:problem} using higher-order information with complexity guarantees. This paper is the first to develop a higher-order method for solving  DC problems.} At each iteration our method approximates the two smooth parts of the objective function with higher-order Taylor approximations and add proper regularization terms, leading to a higher-order DC algorithm. We also present convergence guarantees for our new algorithm.

\medskip 

\noindent \textit{Contributions.}  Our main contributions are as follows:

(i) We develop a new higher-order tensor  method for solving difference of convex functions as given in problem \eqref{eq:problem}, called \textit{HO-DC}. Our algorithmic framework is flexible in the sense that we can approximate both terms in the objective function  with higher-order Taylor approximations of different degrees  (i.e., we can approximate function $f$ with a Taylor approximation of degree $p$ and  $g$ with a Taylor  of degree $q$). An adaptive variant is also presented.

(ii) We derive a complete convergence analysis  for our  HO-DC algorithm. More precisely, for the general problem, we show that any limit point of the HO-DC iterative sequence is a critical point, the corresponding objective value is monotonically decreasing and the minimum value of the  norms of its subgradients converges globally to zero at a rate of order $\mathcal{O}(k^{-\frac{2\min(p,q)}{p+q+2}})$, where $k$ is the iteration counter and $p$ and $q$ are the degrees of the Taylor approximations for objectives $f$ and $g$ , respectively. When the objective function satisfies the Kurdyka-Lojasiewicz property (e.g., the objective function is semi-algebraic), we prove that the whole sequence generated by HO-DC algorithm converges and  derive  (linear) sublinear convergence rates in the iterates (depending on the parameter of the KL property). 

(iii) The subproblem we need to solve at each iteration of HO-DC  is usually nonconvex and it can have local minima. However, we show  for $p, q \in\{1,2\}$ that our approach is implementable, since this subproblem is equivalent to minimizing an explicitly written one-dimensional convex function over a convex set that can be solve using efficient convex optimization tools. We believe that this is an important step towards practical implementation of higher-order (such as cubic regularized Newton type) methods in DC programming. 

\medskip 

\noindent  Besides providing a unifying  \textit{global} convergence   analysis of  higher-order  methods for DC problems, in special cases, where complexity bounds are known for some particular   algorithms, our convergence results recover the  existing bounds. For example, for $p=q=1$, we recover (see Theorem \ref{th:4}) and even extend (see Theorem \ref{th:2}) the convergence results obtained in \cite{ArtFle:18,ArtVuo:20,LePha:18,LeHuy:18,AnNam:17,BanBot:18} for the (proximal) DC algorithms.

\section{Notations and preliminaries}

\noindent In what follows, $ \mathbb{R}^n $ denotes the finite-dimensional Euclidean space endowed with the standard inner product $\langle s,x\rangle = s^{T}x$ and the corresponding norm $\|x\| =(x^Tx)^{1/2}$ for any $s,x\in  \mathbb{R}^n$.   For a twice differentiable function $\phi$ on a convex and open domain $\text{dom}\;\phi \subseteq \mathbb{R}^n$, we denote by $\nabla \phi(x)$ and $\nabla^{2} \phi(x)$ its gradient and hessian evaluated at  $x\in \text{dom}\;\phi$, respectively.  Throughout the paper, we consider $p$ a positive integer.  In what follows, we often work with directional derivatives of function $\phi$ at $x$ along directions $h_{i}\in \mathbb{R}^n$ of order $p$, $D^{p} \phi (x)[h_{1},\cdots,h_{p}]$, with $i=1:p$. For example, the directional derivative of order $1$ of the function $\phi$ is defined in the usual way: $D \phi (x)[h] = \lim_{\alpha \to 0} (\phi(x + \alpha h) -\phi(x))/\alpha$. If all the directions $h_{1},\cdots,h_{p}$ are the same, we use the notation
$ D^{p} \phi(x)[h]$, for $h\in\mathbb{R}^n.$
Note that if $\phi$ is $p$ times differentiable, then $D^{p} \phi (x)$ is a symmetric $p$-linear form and its norm is defined as \cite{Nes:20}:
\begin{align*}
\norm{D^{p} \phi (x)}= \max\limits_{h\in\mathbb{R}^n} \left\lbrace D^{p} \phi (x)[h]^{p} :\norm{h}\leq 1 \right\rbrace.  
\end{align*}

\noindent Further, the Taylor approximation of order $p$ of $\phi$ at $x\in \text{dom}\;\phi$ is denoted:
\begin{align*}
    T_p^{\phi}(y; x)= \phi(x) + \sum_{i=1}^{p} \frac{1}{i !} D^{i} \phi(x)[y-x]^{i} \quad \forall y \in \mathbb{R}^n.
\end{align*}

\noindent Let $\phi: \mathbb{R}^n\mapsto\bar{\mathbb{R}}$ be a $p$ differentiable function on $\text{dom}\;\phi$. Then, the $p$ derivative is Lipschitz continuous if there exist a constant $L_p^{\phi} > 0$ such that:
   	\begin{equation} \label{eq:001}
    	\| D^p \phi(x) - D^p \phi(y) \| \leq L_p^{\phi} \| x - y \| \quad  \forall x,y \in \text{dom}\;\phi.
    	\end{equation}  

\noindent Let us give now a nontrivial   example of a function having the $p$  derivative Lipschitz continuous. 
\begin{example}
	\label{expl:2}
	For given $a_{i} \in \mathbb{R}^n, 1 \leq i \leq m,$ consider the log-sum-exp function:
	\[  \phi(x)=\log \left(\sum_{i=1}^{m} e^{\left\langle a_{i}, x\right\rangle}\right), \quad x \in \mathbb{R}^n.  \]
	Then, the Lipschitz continuous condition	\eqref{eq:001} holds  for $p \in \{1, 2, 3\}$, see  \cite{NecDan:20}. Note that for $m=2$ and  $a_1 =0$, we recover the  expression of logistic regression function, which is a loss function widely used in machine learning. \qed 
\end{example}
     
\noindent It is well known that if \eqref{eq:001} holds,  then the residual between the function  and its Taylor approximation can be bounded \cite{Nes:20}:
    \begin{equation}\label{eq:TayAppBound}
    \vert \phi(y) - T_p^{\phi}(y;x) \vert \leq  \frac{L_p^{\phi}}{(p+1)!} \norm{y-x}^{p+1} \quad  \forall x,y\in \text{dom}\;\phi.
    \end{equation}

\noindent  If $p \geq 2$, we also have the following inequalities valid for all $ x,y\in \text{dom}\;\phi$:
    \begin{align} \label{eq:TayAppG1}
    &\| \nabla \phi(y) - \nabla T_p^{\phi}(y;x) \| \leq \frac{L_p^{\phi}}{p!} \|y - x \|^p, \\
    \label{eq:TayAppG2}
    &\|\nabla^2 \phi(y) - \nabla^2 T_p^{\phi}(y;x) \| \leq \frac{L_p^{\phi}}{(p-1)!} \|y - x\|^{p-1},
    \end{align}

\noindent where the norm defined in \eqref{eq:TayAppG2} corresponds to the spectral norm of a symmetric matrix. Next, we  provide the definition of subdifferential of a function \cite{Mor:06}.

\begin{definition}
 Let $\phi: \mathbb{R}^n \to \bar{\mathbb{R}}$ be a proper lower semicontinuous function. For a given $x \in \text{dom} \; \phi$, the Fr$\acute{e}$chet subdifferential of $\phi$ at $x$, written $\widehat{\partial}\phi(x)$, is the set of all vectors $\phi^{x}\in\mathbb{R}^n$ satisfying:
\begin{equation*}
\lim_{x\neq y}\inf\limits_{y\to x}\frac{\phi(y) - \phi(x) - \langle \phi^{x}, y - x\rangle}{\norm{x-y}}\geq 0.
\end{equation*}
When $x \notin\text{dom} \; \phi$, we set $\widehat{\partial} \phi(x) = \emptyset$. The limiting-subdifferential, or simply the subdifferential of $\phi$ at $x\in \text{dom} \, \phi$, written $\partial \phi(x)$, is defined as \cite{Mor:06}:
\begin{align*}
\partial \phi(x) \!=\! \left\{ \phi^{x}\in \mathbb{R}^n\!\!: \exists x^{k}\to\! x, \phi(x^{k})\to\! \phi(x) \; \text{and} \; \exists \phi^{x}_{k}\in\widehat{\partial} \phi(x^{k}) \; \text{s.t.} \;  \phi^{x}_{k} \to \phi^{x}\right\}. 
\end{align*} 
\end{definition}

\noindent If the function $\phi$ is proper lower semicontinous and convex, then  $\partial \phi(x) = \widehat{\partial}\phi(x)$.    Denote $S_\phi(x) := \text{dist}(0,\partial \phi(x))$. A vector $x^*$ is called a stationary point of the function $\phi$ if $0 \in \partial \phi(x^*)$. It is known that any (local) minima of a function $\phi$ is a stationary point \cite{Mor:06}. The function $\phi$ is said to be coercive if
$\phi(x) \to \infty$ whenever $x \to \infty$.  Next, we recall the definition of a function satisfying the \textit{Kurdyka-Lojasiewicz} (KL) property (see \cite{AtBol:09,AtBol:13,BolSab:14} for more details). 
\begin{definition}
\label{def:kl}
\noindent A proper lower semicontinuous  function $\phi: \mathbb{R}^n \to \bar{\mathbb{R}}$ satisfies the  \textit{Kurdyka-Lojasiewicz (KL)} property on the compact connected set $\Omega \subseteq \text{dom} \; \phi$ on which $\phi$ takes a constant value $\phi_*$ if there exist $\delta, \epsilon >0$ s.t. one has:
\begin{equation}\label{eq:kl_0}
\kappa' (\phi(x) - \phi_*) \cdot  S_{\phi}(x)  \geq 1  \quad   \forall x\!:  \emph{\emph{dist}}(x, \Omega) \leq \delta, \;  \phi_* < \phi(x) < \phi_* + \epsilon,  
\end{equation}
where $\kappa: [0,\epsilon] \mapsto \mathbb{R}_+$ is  concave differentiable function with  $\kappa(0) = 0$ and $\kappa'>0$. 
\end{definition} 

\noindent If $\phi$ is semi-algebraic,  there  exist $r >1$ and $\sigma_r>0$ such that  $ \kappa$ in Definition \ref{def:kl} is of the form  $\kappa (t) = \sigma_r^{\frac{1}{r}} \frac{r}{r-1} t^{\frac{r-1}{r}}$  \cite{AtBol:09,AtBol:13,BolSab:14}. In this case  the KL property establishes the following local geometry of $f$ around a compact set~$\Omega$:
\begin{equation}\label{eq:kl}
\phi(x) - \phi_*  \leq \sigma_r  S_{\phi}(x)^r \quad   \forall x\!: \;  \text{dist}(x, \Omega) \leq \delta, \; \phi_* < \phi(x) < \phi_* + \epsilon.  
\end{equation}

\noindent  Note that the relevant aspect of the KL property is when $\Omega$ is a subset of stationary points for $f$, i.e.  $\Omega \subseteq \{x: 0 \in \partial \phi(x) \}$, since it is easy to establish the KL property when $\Omega$ is not related to stationary points. The KL property holds for a large class of functions including semi-algebraic functions (e.g., real polynomial functions), vector or matrix (semi)norms (e.g., $\|\cdot\|_p$ with $p \geq 0$ rational number), logarithm functions,  exponential functions and  uniformly convex functions,  see \cite{AtBol:09,AtBol:13,BolSab:14} for a comprehensive list.


\section{Higher-order DC algorithm}
In this section, we present a new  higher-order algorithm for solving  the DC problem \eqref{eq:problem}. We consider the following assumptions for problem \eqref{eq:problem}:
\begin{assumption}
\label{as:1}
The following statements hold:
\vspace{-0.25cm}
\begin{enumerate}
    \item The convex function  $f$ is $p$ times differentiable function  with the $p$th derivative  Lipschitz continuous with constant $L_p^f$ on the closed convex set $\text{dom}\psi$.
    \item The convex function  $g$ is $q$ times differentiable function  with the $q$th derivative  Lipschitz continuous with constant $L_q^g$ on the closed convex set $\text{dom}\psi$.
    \item The proper lower semicontinuous convex function $\psi$ is simple. 
    \item Problem \eqref{eq:problem} has a solution and hence $\inf_{x \in \text{dom}\psi}F(x) \geq F^* > - \infty$.
\end{enumerate}
\end{assumption}

\noindent From Assumption \ref{as:1} and the inequality \eqref{eq:TayAppBound}, we get: 
\begin{align}
\label{eq:tpf}
&    \left |f(y) - T_p^{f}(y;x) \right| \leq \frac{L_p^f}{(p+1)!}\|y - x\|^{p+1}\;\;\;  \forall x, y \in\mathbb{R}^n \\
&  \left |g(y) - T_q^{g}(y;x) \right| \leq \frac{L_q^g}{(q+1)!}\|y - x\|^{q+1}\;\;\;  \forall x, y \in\mathbb{R}^n 
\label{eq:tqg}
\end{align}
Based on these  bounds one can  consider the following upper approximation:
\begin{align}\label{sub:1}
& m_{p}^q(y;x) \nonumber \\
& := T_p^f(y;x) + \frac{M_p}{(p+1)!}\|y - x\|^{p+1} + \psi(y) -  T_q^g(y;x) + \frac{M_q}{(q+1)!}\|y - x\|^{q+1} \nonumber \\
& \geq  F(y) \quad \forall y  \in \text{dom}F, \; M_p > L_p^f, \; M_q > L_q^g. 
\end{align}

\noindent Let us analyze in more depth the approximation model $m_{p}^q(\cdot;x)$ that needs to be minimized at each  current point $x$. First, according to Assumption \ref{as:1}.3, $\psi$ is a \textit{simple} function, that is, the presence of  this function in the approximation model $m_{p}^q(\cdot;x)$ does not add computational difficulties in minimizing it. Second,  note that if we denote with $y^*(x)$ any stationary point for the approximation model $m_{p}^q(y;x)$, i.e., $0 \in \partial m_{p}^q(y^*(x);x)$, then $y^*(x) = x$ if and only if $x$ is a stationary point for the original objective function $F$, i.e., $0 \in \partial F(x)$, respectively. 

\medskip

\noindent Now we are ready to present our new higher-order algorithm for DC programming, \textit{called HO-DC}:
\begin{center}
\noindent\fbox{
\parbox{11.5cm}{
\textbf{ Algorithm HO-DC}
\begin{enumerate}
\item[] Choose $x_0  \in \text{dom}\psi$, $M_p > L_p^f$ and $M_q > L_q^g$.
\item[] For $k\geq 0$ do: 
\item[] Compute $x_{k+1}$ a stationary point of subproblem:
\begin{align}
\label{eq:subpr} 
  \min_{y} m_{p}^q(y;x_k)
\end{align}
satisfying the following descent:
\begin{align}
\label{des:1}
  m_{p}^q(x_{k+1};x_k)\leq m_{p}^q(x_k;x_k) =  F(x_k).
\end{align}
\end{enumerate}
}  }
\end{center}

\medskip 

\noindent Our HO-DC algorithm is flexible in the sense that we can approximate both smooth terms $f$ and $g$ in the objective function of \eqref{eq:problem}  with higher-order Taylor approximations of different degrees $p$ and $q$, respectively.  Note that for $f\equiv 0$ and $q=1$ we recover  (proximal) DCA variants  proposed e.g., in  \cite{LePha:18,LeHuy:18,TaoLe:97,AnNam:17,BanBot:18}. Moreover, for $p=q=2$ HO-DC algorithm becomes a cubic regularized Newton method, see \cite{NesPol:06},  adapted to DC problems. It is also important to note that  in  our convergence analysis below we can relax the \textit{exact} stationary point condition for $x_{k+1}$ in subproblem \eqref{eq:subpr}, i.e., $0 \in \partial m_p^q(x_{k+1};x_k)$,   to  an \textit{approximate} stationary point condition of the form: 
$$\|  m^{x_{k+1}}  \| \leq \theta \| x_{k+1} - x_k \|^{\min(p,q)},$$ 
for some fixed parameter $\theta >0$, where $m^{x_{k+1}}   \in \partial m_p^q(x_{k+1};x_k)$.  For simplicity of the exposition however, we assume below that $x_{k+1}$ is a stationary point of the subproblem \eqref{eq:subpr}. 

\medskip

\noindent Let us also discuss about the \textit{implementability} of HO-DC algorithm. In order to get  a decreasing sequence $(F(x_k))_{k\geq 0}$, it is enough to assume that $x_{k+1}$ satisfies the descent \eqref{des:1}. However, to derive convergence rates for the sequence $(x_k)_{k\geq 0}$ to stationary points of problem \eqref{eq:problem}, we need to require additional properties for $x_{k+1}$, i.e., $x_{k+1}$ to be a stationary point of the subproblem \eqref{eq:subpr}. First, let us recall a well-known result stating that if the function  $f$ is  convex and $p>2$ differentiable,  having  the $p$ derivative Lipschitz  with constant $L_p^f$, then the regularized Taylor approximation \cite{Nes:20}:
	\begin{equation*} 
	y \mapsto  T_p^{f}(y;x) + \frac{M_p}{(p+1)!}\| y-x\|^{p+1} 
	\end{equation*}
is also a convex function  in   $y$ provided that $M_p \geq pL_p^{f}$.  In conclusion, based on Assumption \ref{as:1}.1 and choosing the regularization parameter $M_p$ appropriately, the first term in  $m_{p}^q(y;x)$, i.e., $T_p^f(y;x) + \frac{M_p}{(p+1)!}\|y - x\|^{p+1} + \psi(y)$,  is always convex in the first argument $y$ for any $p\geq 1$ (recall that for $p=1$ or $p=2$ the Taylor approximation of a convex function is always convex). As a consequence, for any $p \geq 1$ and $q=1$ our proposed approximation model   $m_{p}^q(\cdot;x)$ is convex, thus easy to minimize.  Of course,  the subproblem \eqref{eq:subpr}, which we need to solve in order to compute $x_{k+1}$, is usually  nonconvex for all $q \geq 2$.   We can show however  that one can still use the powerful tools from convex optimization to solve the \textit{nonconvex} subproblem \eqref{eq:subpr}, even globally, for other  choices of $p$ and $q$. More precisely, when  the Taylor approximations for $f$ and $g$ are of order $1$ or $2$,  and $\psi =0$, one can prove that the corresponding subproblem \eqref{eq:subpr} is equivalent to a convex one-dimensional optimization problem. Indeed, for $p=q=2$ or $p=2, q=1$ or $p=1, q=2$,  \textit{HO-DC algorithm becomes a cubic regularized Newton scheme for minimizing difference of convex functions} and the corresponding cubic regularized Newton step \eqref{eq:subpr} takes the simplified form (we denote $h=y-x_k$):
\begin{align}  
\label{eq:subproblem2}
\min_{h} \, \langle v, h\rangle + \frac{1}{2} \langle H h, h\rangle + \frac{M}{6} \|h\|^3,   
\end{align}
where $v \in \rset^n, H \in \rset^{n \times n}$ is a symmetric matrix and $M>0$. Then, the \textit{global minimum} of (possibly nonconvex) subproblem  \eqref{eq:subproblem2} can be computed as (see \cite{NesPol:06}(Section 5)):
\[  h^* = - \left( H + \frac{M r^*}{2} I_n\right)^{-1} v, \]
where  the convex set $\mathcal{D} = \left \{r \in \rset: \; H + \frac{M r}{2} I_n \succ 0, \; r \geq 0  \right \}$ and $r^*$ is the solution of  the convex one-dimensional optimization subproblem: 
\[ \min_{r \in \mathcal{D}}  \frac{1}{2}  \left \langle \left( H + \frac{M r}{2} I_n\right)^{-1} v, v \right \rangle  + \frac{M}{12} r^3.  \] 
There are many efficient numerical tools from convex optimization to solve this convex one-dimensional subproblem in $r$, e.g., interior point methods \cite{NesNem:94}. Alternatively, in the non-degenerate situation the solution of this 
subproblem  can be found from one-dimensional equation:
\[ r = \| ( H + \frac{M r}{2} I_n)^{-1} v \|, \;\; r \geq \frac{2}{M} \max(0, - \lambda_{\min}(H)).   \]
Several  technique for solving such equation were developed for 
trust region methods (see \cite{ConGou:00}(Chapter 7)).


\section{Convergence analysis for HO-DC algorithm}
\noindent In the next sections we derive a complete convergence analysis for our higher-order difference of convex functions  (HO-DC) algorithm. First, let us prove that the sequence $(F(x_k))_{k\geq 0}$ is nonincreasing.

\begin{theorem}
\label{th:1}
Let Assumption \ref{as:1} hold and let $(x_k)_{k\geq 0}$ be generated by HO-DC algorithm with $M_p > L_p^f$ and $M_q > L_q^g$. Then, we have:
\begin{enumerate}
\item  The sequence $(F(x_k))_{k\geq 0}$ satisfies the descent:
\begin{align}\label{eq:dec}
F(x_{k+1})  \leq F(x_k) - 2  \left( \frac{M_p - L_p^f}{(p+1)!} \cdot \frac{M_q - L_q^g}{(q+1)!} \right)^{\frac{1}{2}}   \|x_{k+1} - x_k \|^{\frac{p+q+2}{2}}. 
\end{align}
\item The sequence $(x_k)_{k\geq 0}$ satisfies:
\begin{align*}
    \sum_{i=0}^{\infty} \|x_{k+1} - x_k\|^{\frac{p+q+2}{2}} < \infty \;\;\text{ and } \; \lim_{k\to \infty} \|x_{k+1} - x_k\| = 0. 
\end{align*}
\end{enumerate}
\end{theorem}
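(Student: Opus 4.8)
The plan is to prove the descent \eqref{eq:dec} by lower-bounding the gap between the model value and the true objective at the new iterate, and then to obtain part (2) by a simple telescoping argument.

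First I would compute, for $h := x_{k+1} - x_k$, the difference $m_p^q(x_{k+1};x_k) - F(x_{k+1})$. After substituting the definition \eqref{sub:1} and cancelling the common $\psi(x_{k+1})$ term, this equals
\begin{equation*}
\left[ T_p^f(x_{k+1};x_k) - f(x_{k+1}) \right] + \left[ g(x_{k+1}) - T_q^g(x_{k+1};x_k) \right] + \frac{M_p}{(p+1)!}\|h\|^{p+1} + \frac{M_q}{(q+1)!}\|h\|^{q+1}.
\end{equation*}
The two bracketed Taylor residuals are bounded from below by $-\frac{L_p^f}{(p+1)!}\|h\|^{p+1}$ and $-\frac{L_q^g}{(q+1)!}\|h\|^{q+1}$ using \eqref{eq:tpf} and \eqref{eq:tqg}, respectively. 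This converts the regularization constants $M_p, M_q$ into the excess constants $M_p - L_p^f$ and $M_q - L_q^g$. Combining with the algorithmic descent \eqref{des:1}, i.e. $m_p^q(x_{k+1};x_k) \leq F(x_k)$, then yields
\begin{equation*}
F(x_{k+1}) \leq F(x_k) - \frac{M_p - L_p^f}{(p+1)!}\|h\|^{p+1} - \frac{M_q - L_q^g}{(q+1)!}\|h\|^{q+1}.
\end{equation*}

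The only nontrivial step is to collapse the two distinct powers $\|h\|^{p+1}$ and $\|h\|^{q+1}$ into the single power appearing in \eqref{eq:dec}. For this I would apply the arithmetic-geometric mean inequality $a + b \geq 2\sqrt{ab}$ to the two regularization terms: the geometric mean produces precisely the coefficient $\big(\frac{M_p - L_p^f}{(p+1)!}\cdot\frac{M_q - L_q^g}{(q+1)!}\big)^{1/2}$ and averages the exponents to $\frac{(p+1)+(q+1)}{2} = \frac{p+q+2}{2}$, which is exactly \eqref{eq:dec}. For part (2), I would sum \eqref{eq:dec} over $k = 0, \dots, N-1$; the right-hand side telescopes, so the accumulated step terms are bounded by $F(x_0) - F(x_N) \leq F(x_0) - F^*$, invoking the lower bound $F^*$ from Assumption \ref{as:1}.4. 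Letting $N \to \infty$ gives $\sum_{k \geq 0}\|x_{k+1}-x_k\|^{\frac{p+q+2}{2}} < \infty$, and since the general term of a convergent series tends to zero and the exponent $\frac{p+q+2}{2}$ is strictly positive, $\|x_{k+1}-x_k\| \to 0$ follows.

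I do not expect a genuine obstacle here: once the two Taylor residual bounds and the descent condition are in place the estimate is direct. The point demanding the most care is the correct orientation of the residual bounds — the $f$-residual $T_p^f - f$ and, through the concave part, the $g$-residual $g - T_q^g$ must each be bounded \emph{below} so that the excess constants $M_p - L_p^f$ and $M_q - L_q^g$ emerge with positive sign; the AM-GM step then only requires these to be nonnegative, which is guaranteed by the standing choices $M_p > L_p^f$ and $M_q > L_q^g$.
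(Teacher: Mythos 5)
Your proposal is correct and follows essentially the same route as the paper's own proof: both use the two-sided Taylor residual bounds \eqref{eq:tpf}--\eqref{eq:tqg} together with the descent condition \eqref{des:1} to obtain the two excess regularization terms, collapse them into a single power via the AM--GM inequality $a+b \geq 2\sqrt{ab}$, and conclude part (2) by telescoping against the lower bound $F^*$. The only difference is cosmetic (you compute $m_p^q(x_{k+1};x_k) - F(x_{k+1})$ directly, while the paper adds the regularization terms to the Taylor inequality), and your remark on the orientation of the residual bounds is exactly the point where care is needed.
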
 

\begin{proof}
From the inequalities \eqref{eq:tpf}  and \eqref{eq:tqg} applied to  functions $f$ and $g$, we get:
\begin{align*}
  &  - \frac{L_p^f}{(p+1)!} \|x_{k+1} - x_k \|^{p+1} -  \frac{ L_q^g}{(q+1)!} \|x_{k+1} - x_k \|^{q+1} + F(x_{k+1}) \\ 
  & \leq T_p^f(x_{k+1};x_k) + \psi(x_{k+1}) -  T_q^g(x_{k+1};x_k).
\end{align*}
Further, adding on both sides of the previous inequality $\frac{M_p}{(p+1)!} \|x_{k+1} - x_k \|^{p+1} + \frac{M_q}{(q+1)!} \|x_{k+1} - x_k \|^{q+1}$ and using the descent \eqref{des:1}, we also get:
\begin{align*}
& \left( \frac{M_p - L_p^f}{(p+1)!} \|x_{k+1} - x_k \|^{p+1} + \frac{M_q - L_q^g}{(q+1)!} \|x_{k+1} - x_k \|^{q+1} \right)  + F(x_{k+1})\\
&\leq  m_p^q(x_{k+1};x_k)  \overset{\eqref{des:1}}{\leq} m_p^q(x_k;x_k) = F(x_k).
\end{align*}
\noindent Using  that   $2 (a b)^{1/2}\leq a + b$ for any  $a, b \geq 0$, we further get:
\begin{align}   
\label{ineq:ab}
& 2  \left( \frac{M_p - L_p^f}{(p+1)!} \cdot \frac{M_q - L_q^g}{(q+1)!} \right)^{\frac{1}{2}}   \|x_{k+1} - x_k \|^{\frac{p+q+2}{2}} \nonumber \\ 
& \leq  \frac{M_p - L_p^f}{(p+1)!} \|x_{k+1} - x_k \|^{p+1} + \frac{M_q - L_q^g}{(q+1)!} \|x_{k+1} - x_k \|^{q+1} \\
& \leq F(x_k) - F(x_{k+1}), \nonumber 
\end{align}
which yields the first statement \eqref{eq:dec}. Further, summing the last inequality from $i=0$ to $k-1$ and using that $F$ is bounded from below by  $F^*$ (see Assumption \ref{as:1}.4), we get:
\begin{align*}
   &  \sum_{i=0}^{k-1} 2  \left( \frac{M_p - L_p^f}{(p+1)!} \cdot \frac{M_q - L_q^g}{(q+1)!} \right)^{\frac{1}{2}}   \|x_{i+1} - x_i \|^{\frac{p+q+2}{2}} \\
    &\leq F(x_0) - F(x_k) \leq F(x_0) - F^* < \infty, 
\end{align*}
and taking $k \to \infty$ the second statement follows. \qed
\end{proof}

\medskip 

\noindent Let us denote:
\[  T_{p,q} (y;x) = T_p^f(y;x) + \frac{M_p}{(p+1)!}\|y - x\|^{p+1} -  T_q^g(y;x) + \frac{M_q}{(q+1)!}\|y - x\|^{q+1},  \]
and consequently,
\[  m_p^q(y;x) =  T_{p,q}(y;x) + \psi(y). \]
Moreover, from Theorem \ref{th:1} we have that $\| x_{k+1} - x_k\|$ converges to $0$, thus it is bounded, i.e., $\| x_{k+1} - x_k\| \leq C_x$ for all $k \geq 0$. Hence, let us  define:
\[ C_p^q = \max \left(  \frac{L_p^f + M_p}{p!} C_x^{p-q} +  \frac{L_q^g + M_q}{q!},  \frac{L_p^f + M_p}{p!}  +  \frac{L_q^g + M_q}{q!} C_x^{q-p}  \right).   \]

\begin{theorem}
\label{th:2}
Let Assumption \ref{as:1} hold and let $(x_k)_{k\geq 0}$ be generated by HO-DC algorithm with $M_p > L_p^f$ and $M_q > L_q^g$. Then, we have the following convergence rate in the minimum norm of subgradients of the objective function:

\begin{align}
\label{eq:rate1}
&  \min_{i=0:k-1}   S_F(x_{i})  = \min_{i=0:k-1}  \min_{F^{i} \in \partial F(x_{i})} \|F^{i}\| \\ 
&\leq \frac{C_p^q}{\left(2^2  \frac{M_p - L_p^f}{(p+1)!} \cdot \frac{M_q - L_q^g}{(q+1)!} \right)^{\frac{\min(p,q)}{p+q+2}}} \cdot \frac{(F(x_0) - F^*)^{\frac{2\min(p,q)}{p+q+2}}}{k^{\frac{2\min(p,q)}{p+q+2}}}. \nonumber 
\end{align}

\noindent Moreover, any limit point of the sequence $(x_k)_{k\geq 0}$ is a stationary point of problem \eqref{eq:problem}. If in addition, $F$ is coercive or the sequence  $(x_k)_{k\geq 0}$ is bounded, then there exits a subsequence of $(x_k)_{k\geq 0}$ which converges to a stationary point of problem \eqref{eq:problem}.
\end{theorem}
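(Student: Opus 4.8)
The plan is to turn the subproblem stationarity of $x_{k+1}$ into a quantitative bound on a genuine subgradient of $F$ at $x_{k+1}$, and then feed this into the descent inequality \eqref{eq:dec} of Theorem~\ref{th:1}. Writing $m_p^q(\cdot;x_k) = T_{p,q}(\cdot;x_k) + \psi(\cdot)$, the stationarity $0 \in \partial m_p^q(x_{k+1};x_k)$ means $\xi_{k+1} := -\nabla T_{p,q}(x_{k+1};x_k) \in \partial\psi(x_{k+1})$. Since $f$ and $g$ are smooth, the vector
\[
F^{k+1} := \nabla f(x_{k+1}) - \nabla g(x_{k+1}) + \xi_{k+1}
\]
lies in $\partial F(x_{k+1})$. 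Differentiating $T_{p,q}$ and regrouping, $F^{k+1}$ equals $[\nabla f(x_{k+1}) - \nabla T_p^f(x_{k+1};x_k)] - [\nabla g(x_{k+1}) - \nabla T_q^g(x_{k+1};x_k)]$ minus the gradients of the two regularizers. I would bound the first two brackets by \eqref{eq:TayAppG1} (for $p=1$, resp.\ $q=1$, directly by the Lipschitz definition \eqref{eq:001}) and the regularizer gradients explicitly, then use $\|x_{k+1}-x_k\| \le C_x$ to trade the excess powers $p-q$ (or $q-p$) for a factor $C_x^{|p-q|}$. This yields exactly
\[
S_F(x_{k+1}) \le \|F^{k+1}\| \le C_p^q \, \|x_{k+1}-x_k\|^{\min(p,q)},
\]
with $C_p^q$ as defined just before the theorem.

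Next I would combine this with the descent. From \eqref{eq:dec}, summing over $i=0,\dots,k-1$ and using $F \ge F^*$ gives $\sum_{i=0}^{k-1}\|x_{i+1}-x_i\|^{(p+q+2)/2} \le (F(x_0)-F^*)/A$, where $A = 2(\frac{M_p-L_p^f}{(p+1)!}\frac{M_q-L_q^g}{(q+1)!})^{1/2}$. Hence the minimal step over the window satisfies $\min_{i}\|x_{i+1}-x_i\| \le (\frac{F(x_0)-F^*}{Ak})^{2/(p+q+2)}$. Since $t \mapsto t^{\min(p,q)}$ is increasing, the subgradient bound propagates through the minimum, and after substituting $A^2 = 2^2 \frac{M_p-L_p^f}{(p+1)!}\frac{M_q-L_q^g}{(q+1)!}$ this reproduces the stated rate \eqref{eq:rate1} with the announced exponent $\frac{2\min(p,q)}{p+q+2}$ (up to the harmless index shift between $S_F(x_i)$ and $S_F(x_{i+1})$).

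For the limit-point claim, let $\bar x$ be a limit point with $x_{k_j}\to\bar x$; since $\|x_{k+1}-x_k\|\to 0$ by Theorem~\ref{th:1}, also $x_{k_j+1}\to\bar x$. The delicate point is that $\psi$ is only lower semicontinuous, so I cannot directly assert the attentive convergence $F(x_{k_j+1})\to F(\bar x)$ required to close the limiting subdifferential. To resolve this I would exploit the convexity of $\psi$: the constructed subgradient $\xi_{k_j+1} = -\nabla T_{p,q}(x_{k_j+1};x_{k_j})$ satisfies, as both arguments tend to $\bar x$ and the regularizer gradients vanish, $\xi_{k_j+1}\to \nabla g(\bar x) - \nabla f(\bar x)$. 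The subgradient inequality for the convex $\psi$ then forces $\limsup_j \psi(x_{k_j+1}) \le \psi(\bar x)$, which together with lower semicontinuity gives $\psi(x_{k_j+1})\to\psi(\bar x)$ and $\nabla g(\bar x)-\nabla f(\bar x)\in\partial\psi(\bar x)$; this inclusion is exactly the stationarity condition \eqref{eq:opt1}. Equivalently, $F^{k_j+1}\to 0$ together with the now-established $F(x_{k_j+1})\to F(\bar x)$ (using continuity of $f,g$) gives $0\in\partial F(\bar x)$ by closedness of $\partial F$.

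Finally, for existence of a convergent subsequence: if $(x_k)$ is bounded, Bolzano--Weierstrass supplies a limit point, stationary by the previous paragraph. If $F$ is coercive, monotonicity of $(F(x_k))$ confines the whole sequence to the sublevel set $\{x: F(x)\le F(x_0)\}$, which coercivity makes bounded, reducing to the bounded case. I expect the main obstacle to be the limit-point argument, specifically certifying the attentive convergence $F(x_{k_j+1})\to F(\bar x)$ past the lower-semicontinuous term $\psi$; the convexity of $\psi$ and the explicit convergence of $\xi_{k_j+1}$ are precisely what make it go through.
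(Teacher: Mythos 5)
Your proposal is correct and follows essentially the same route as the paper: stationarity of the subproblem yields $-\nabla T_{p,q}(x_{k+1};x_k)\in\partial\psi(x_{k+1})$, the Taylor gradient bounds give $S_F(x_{k+1})\le C_p^q\|x_{k+1}-x_k\|^{\min(p,q)}$, telescoping the descent inequality gives the rate, and the limit-point claim follows from closedness of the graph of the convex subdifferential $\partial\psi$ (which the paper invokes in one line and you justify explicitly via the subgradient inequality and lower semicontinuity). The only differences are cosmetic: you extract the minimal step before raising to the power $\min(p,q)$ whereas the paper raises $S_F$ to the power $\frac{p+q+2}{2\min(p,q)}$ and sums, and both share the same harmless index shift between $S_F(x_i)$ and $S_F(x_{i+1})$.
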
 

\begin{proof}
Since $x_{k+1}$ is a stationary point of the subproblem  \eqref{eq:subpr},  then there exists $\zeta_{k+1} \in \partial \psi(x_{k+1})$ satisfying:
\begin{align}
\label{eq:zkp1}
 \nabla T_{p,q}(x_{k+1};x_k) + \zeta_{k+1} =0.   
\end{align}
Obviously, from basic calculus rules \cite{Mor:06}, we have that $$\nabla f(x_{k+1}) + \zeta_{k+1} - \nabla g(x_{k+1}) \in \partial F(x_{k+1}).$$  
On the other hand, from Assumption  \ref{as:1}  and inequality \eqref{eq:TayAppG1} we have:
\begin{align}
\label{eq:t1f}
\| \nabla f(x_{k+1}) - \nabla T_p^f(x_{k+1};x_k)\| \leq \frac{L_p^f}{p!} \|x_{k+1} - x_k\|^p
\end{align}
and 
\begin{align}
\label{eq:t1g}
\| \nabla g(x_{k+1}) - \nabla T_q^g(x_{k+1};x_k)\| \leq \frac{L_q^g}{q!} \|x_{k+1} - x_k\|^q. 
\end{align}
Then, combining the previous relations, we obtain:
\begin{align*}
 \min_{F^{k+1} \in \partial F(x_{k+1})} \|F^{k+1}\| & =  \min_{\zeta \in \partial \psi(x_{k+1})} \|\nabla f(x_{k+1}) + \zeta - \nabla g(x_{k+1}) \| \\
& \leq \| \nabla f(x_{k+1}) + \zeta_{k+1} - \nabla g(x_{k+1}) \|  \\
& \overset{\eqref{eq:zkp1}}{=}  \| \nabla f(x_{k+1})  -  \nabla T_{p,q}(x_{k+1};x_k) - \nabla g(x_{k+1}) \| \\
& \overset{\eqref{eq:t1f},\eqref{eq:t1g}}{\leq}   \frac{L_p^f + M_p}{p!}  \|x_{k+1} - x_k\|^p + \frac{L_q^g + M_q}{q!}  \|x_{k+1} - x_k\|^q. 
\end{align*}
Further,   if $p \geq q$ we have that:
\begin{align*}   
& \frac{L_p^f + M_p}{p!}  \|x_{k+1} - x_k\|^p + \frac{L_q^g + M_q}{q!}  \|x_{k+1} - x_k\|^q  \\
& \leq  \left(  \frac{L_p^f + M_p}{p!}  \|x_{k+1} - x_k\|^{p-q} +  \frac{L_q^g + M_q}{q!}  \right)  \|x_{k+1} - x_k\|^{q} \\
& \leq \left(  \frac{L_p^f + M_p}{p!} C_x^{p-q} +  \frac{L_q^g + M_q}{q!}  \right)  \|x_{k+1} - x_k\|^{q}. 
\end{align*}
Similarly, if $p < q$ we have that:
\begin{align*}   
& \frac{L_p^f + M_p}{p!}  \|x_{k+1} - x_k\|^p + \frac{L_q^g + M_q}{q!}  \|x_{k+1} - x_k\|^q  \\
& \leq \left(  \frac{L_p^f + M_p}{p!}  +  \frac{L_q^g + M_q}{q!} C_x^{q-p}  \right)  \|x_{k+1} - x_k\|^{p}. 
\end{align*}
Hence, using the definition of $C_p^q$, we further get:
\begin{align}
\label{eq:boundgrad}
  S_F(x_{k+1}) = \min_{F^{k+1} \in \partial F(x_{k+1})} \|F^{k+1}\| \leq  C_p^q \cdot \|x_{k+1} - x_k\|^{\min(p,q)}. 
\end{align}
Combining this relation with the descent  from Theorem \ref{th:1}, we further get:
\begin{align*}
&  S_F(x_{k+1}) = \min_{F^{k+1} \in \partial F(x_{k+1})} \|F^{k+1}\|^{\frac{p+q+2}{2\min(p,q)}} \\
& \overset{\eqref{eq:dec}}{\leq} 2^{-1} \left( \frac{M_p - L_p^f}{(p+1)!} \cdot \frac{M_q - L_q^g}{(q+1)!} \right)^{-\frac{1}{2}}  \left(C_p^q \right)^{\frac{p+q+2}{2\min(p,q)}} \cdot (F(x_k) - F(x_{k+1})). 
\end{align*}
Summing up this relation from $i=0$ to $i=k-1$ and using that $F$ is bounded from below by  $F^*$ (see Assumption \ref{as:1}.4), we get our first statement.  Further, from \eqref{eq:zkp1} we have that $-\nabla T_{p,q}(x_{k+1};x_k)  \in \partial \psi(x_{k+1})$. If $\bar x$ is a limit point of $x_k$, there exists a subsequence $(x_{k_j})_{j\geq 0}$ converging to $\bar x$. Then, since $x_{k_j} - x_{k_j-1} \to 0$ (see the second statement of Theorem \ref{th:1}), then also $x_{k_j-1} \to \bar x$,   and  since $\nabla T_{p,q}(\cdot;\cdot)$  is continuous in both arguments, we get that:
\[  -  \nabla T_{p,q}(x_{k_j};x_{k_j-1}) \to  -  \nabla T_{p,q}(\bar x;\bar x) = - (\nabla f(\bar x) -  \nabla g(\bar x)) \in  \partial \psi(\bar x), \]
thanks to the closedness of the graph of $\partial \psi$ and relation \eqref{eq:zkp1}. Thus, $\bar x$ is a stationary point of problem \eqref{eq:problem}. Moreover, by Theorem \ref{th:1}, the sequence $(F(x_k))_{k\geq 0}$ is decreasing and bounded from below (see Assumption \ref{as:1}.4), hence convergent to some finite value $F_* \geq F^*$. Therefore, when $F$ is coercive,  the sequence $(x_k)_{k\geq 0}$ must be bounded, which implies the rest of the claim.
\qed
\end{proof}

\noindent Note that if $p=q$ in Theorem \ref{th:2}   (see eq. \eqref{eq:rate1}) we recover  the usual global convergence rate $\mathcal{O}(k^{-\frac{p}{p+1}})$ for higher-order methods for solving ($p$ smooth) nonconvex optimization problems, see e.g., \cite{BiGaMa:17,cartis2017,NabNec:22,NecDan:20}, thus proving that our  convergence analysis is tight. \textit{To the best of our knowledge, the global convergence rate from Theorem \ref{th:2} is new even for $p=q=1$, i.e., for (proximal) DCA variants already studied in the literature \cite{LePha:18,LeHuy:18,TaoLe:97,AnNam:17,BanBot:18}.  Moreover, this theorem provides the first  worst-case complexity bound for the cubic regularized Newton type method (i.e., when $p=q=2$) in the context of DC programming. }


\section{Convergence analysis for HO-DC algorithm under KL}
Theorem \ref{th:2} shows  that any limit point of the sequence $(x_k)_{k\geq 0}$ generated by HO-DC algorithm is a stationary point of problem \eqref{eq:problem}. The objective in this section is to prove that under KL property of the objective function the whole sequence  $(x_k)_{k\geq 0}$ generated by HO-DC algorithm converges to a critical point of $F$. First, we summarize several properties of the limit point set. The set of all limit points of the sequence $(x_k)_{k\geq 0}$ generated by HO-DC algorithm from a starting point $x_0$ is denoted by $\Omega(x_0)$. 

\begin{lemma}
\label{lemma:kl}
Suppose that Assumption \ref{as:1} holds and  let $(x_k)_{k\geq 0}$ be a sequence generated by  HO-DC algorithm which is assumed to be bounded. Then, $\Omega(x_0)$ is a nonempty, compact and connected set contained in the set of stationary points of the objective function $F$ satisfying  $\lim_{k \to \infty} \text{dist}(x_k,\Omega(x_0))=0$. Moreover, assuming that either  objective $F$ is continuous or $x_{k+1}$ is a global minimum of  subproblem \eqref{eq:subpr}, then $F$ takes a  constant finite value $F_*$ on $\Omega(x_0)$.  
\end{lemma}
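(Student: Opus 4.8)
The plan is to establish the topological properties of $\Omega(x_0)$ first and then the constancy of $F$ on it. Nonemptiness follows since $(x_k)_{k\geq 0}$ is bounded, so by Bolzano--Weierstrass it admits a convergent subsequence. Compactness holds because the set of subsequential limits of any sequence is closed, and here it sits inside a closed ball by boundedness, hence it is closed and bounded in $\mathbb{R}^n$. For connectedness I would use the classical Ostrowski-type argument: Theorem \ref{th:1} gives $\|x_{k+1} - x_k\| \to 0$, so if $\Omega(x_0)$ split into two nonempty closed pieces at positive distance, the iterates could not eventually oscillate between them without a jump of size bounded away from zero, contradicting $\|x_{k+1} - x_k\| \to 0$. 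The relation $\lim_{k\to\infty} \text{dist}(x_k, \Omega(x_0)) = 0$ is obtained by contradiction: a subsequence staying at distance at least $\epsilon$ would, being bounded, possess a further convergent sub-subsequence whose limit lies in $\Omega(x_0)$ yet is at distance at least $\epsilon$ from it, which is absurd. Finally, the inclusion $\Omega(x_0) \subseteq \{x : 0 \in \partial F(x)\}$ is immediate from Theorem \ref{th:2}, which already shows every limit point is stationary for \eqref{eq:problem}.

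For the constancy claim, recall from Theorem \ref{th:2} that $(F(x_k))_{k\geq 0}$ converges to a finite value $F_*$. Fix $\bar x \in \Omega(x_0)$ and a subsequence $x_{k_j} \to \bar x$; since $\text{dom}\,\psi$ is closed we have $\bar x \in \text{dom}\,\psi$. If $F$ is continuous, then $F(\bar x) = \lim_j F(x_{k_j}) = F_*$ at once. When instead $x_{k+1}$ is a \emph{global} minimizer of \eqref{eq:subpr}, note first that $F = (f - g) + \psi$ is lower semicontinuous (sum of a continuous and an lsc function), so $F(\bar x) \leq \liminf_j F(x_{k_j}) = F_*$. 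For the reverse inequality, combine the upper-model bound $F(x_{k_j}) \leq m_p^q(x_{k_j}; x_{k_j-1})$ from \eqref{sub:1} with the global optimality of $x_{k_j}$ tested against the fixed point $\bar x$:
$$F(x_{k_j}) \leq m_p^q(x_{k_j}; x_{k_j-1}) \leq m_p^q(\bar x; x_{k_j-1}).$$
Since $\|x_{k_j} - x_{k_j-1}\| \to 0$ forces $x_{k_j-1} \to \bar x$, and in $m_p^q(\bar x; x_{k_j-1})$ only the base point moves while the argument $\bar x$ (hence $\psi(\bar x)$) is fixed, continuity of the Taylor coefficients of $f$ and $g$ together with the vanishing of the regularizers gives $m_p^q(\bar x; x_{k_j-1}) \to f(\bar x) + \psi(\bar x) - g(\bar x) = F(\bar x)$. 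Hence $F_* = \lim_j F(x_{k_j}) \leq F(\bar x)$, and with lower semicontinuity this yields $F(\bar x) = F_*$. As $\bar x$ was arbitrary, $F \equiv F_*$ on $\Omega(x_0)$.

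The topological steps are essentially standard point-set arguments and the inclusion is inherited from Theorem \ref{th:2}. I expect the genuine difficulty to be the global-minimizer case of the last claim, where continuity of $F$ is unavailable because $\psi$ is merely lower semicontinuous; the key device is to test global optimality against the \emph{fixed} limit $\bar x$, keeping $\psi(\bar x)$ constant so that only the smooth Taylor parts and the vanishing higher-order regularizers need to be passed to the limit through $x_{k_j-1} \to \bar x$.
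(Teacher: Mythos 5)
Your proposal is correct and follows essentially the same route as the paper: the topological facts are the standard Ostrowski-type arguments (which the paper simply delegates to Lemma 3.5 of Bolte--Sabach--Teboulle), and the key step --- handling the non-continuous case by combining lower semicontinuity of $F$ with the global-optimality test $m_p^q(x_{k_j};x_{k_j-1})\leq m_p^q(\bar x;x_{k_j-1})$ and passing to the limit via $x_{k_j-1}\to\bar x$ --- is exactly the paper's argument.
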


\begin{proof}
We have already proved in Theorem \ref{th:2} that any limit point of the sequence $(x_k)_{k\geq 0}$   is a stationary point of the objective function $F$. The set $\Omega(x_0)$ is  nonempty since $x_k$ is assumed to be bounded. Compactness and connectedness of $\Omega(x_0)$ follows from Lemma 3.5 in \cite{BolSab:14}. Further, the relation $\lim_{k \to \infty} \text{dist}(x_k,\Omega(x_0))=0$ follows from the definition of limit points.  Finally, from Theorem \ref{th:1} it follows that  the sequence $(F(x_k))_{k\geq 0}$ is decreasing and bounded from below by $F^*$ (see Assumption \ref{as:1}.4), hence it converges to some finite value $F_* \geq F^*$, i.e., $\lim_{k \to \infty} F(x_k)=F_*$. Let $\bar x \in  \Omega(x_0)$, then there is a subsequence $(x_{k_j})_{j\geq 0}$ of the sequence $(x_k)_{k\geq 0}$ such that $\lim_{j \to \infty} x_{k_j}=\bar x$. If $F$ is continuous, then obviously $F(\bar x) =F_*$, i.e., $F(\Omega(x_0))=F_*$. If $F$ is not continuous, then from Assumption \ref{as:1} it follows that $F$ is lower semicontinuous. Hence, $F_* = {\liminf}_{j \to \infty} F(x_{k_j}) \geq F(\bar x)$. On the other hand, if $x_{k+1}$ is a global minimum of the subproblem \eqref{eq:subpr}, then we have:  
\[  m_{p}^q(x_{k_j};x_{k_j-1}) \leq m_{p}^q(\bar x;x_{k_j-1}) \quad \forall j \geq 0 \]
and from Theorem \ref{th:1} we also have $\lim_{j \to \infty }x_{k_j} - x_{k_j-1} =0$ and consequently  $\lim_{j \to \infty} x_{k_j-1}=\bar x$. From these very reasons and from the  continuity of all (higher-order) derivatives of $f$ and $g$, taking $j \to \infty$ in the previous inequality we get:
$$  F_* =  \limsup_{j \to \infty}  F(x_{k_j}) =  \limsup_{j \to \infty} m_{p}^q(x_{k_j};x_{k_j-1}) \leq \limsup_{j \to \infty}  m_{p}^q(\bar x;x_{k_j-1}) = F(\bar x). $$
Combining the previous relations, we get $F_* = F(\bar x)$ for any $\bar x \in \Omega (x_0)$. 
\qed    
\end{proof}

\noindent Based on the previous lemma, now we are ready to prove the main result of this section. 
\begin{theorem}
\label{th:3}
Suppose that Assumption \ref{as:1} holds and  let $(x_k)_{k\geq 0}$ be a sequence generated by  HO-DC algorithm, assumed to be bounded and having the set of limit points $\Omega(x_0)$. Assume also that $F$ satisfies the  KL inequality on $\Omega(x_0)$. Then, the whole sequence $(x_k)_{k\geq 0}$ converges to a stationary point of the objective function $F$.  
\end{theorem}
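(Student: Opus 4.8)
The plan is to prove that the sequence has \emph{finite length}, i.e. $\sum_{k\ge0}\|x_{k+1}-x_k\|<\infty$, which forces $(x_k)_{k\ge0}$ to be Cauchy, hence convergent; since Theorem~\ref{th:2} already guarantees that every limit point is a stationary point, the limit must then be a stationary point of $F$. I would build on the two structural estimates already at hand: the sufficient decrease $F(x_k)-F(x_{k+1})\ge a\,\|x_{k+1}-x_k\|^{(p+q+2)/2}$ from \eqref{eq:dec} (with $a=2(\tfrac{M_p-L_p^f}{(p+1)!}\cdot\tfrac{M_q-L_q^g}{(q+1)!})^{1/2}$), and the step-to-subgradient bound $S_F(x_{k+1})\le C_p^q\,\|x_{k+1}-x_k\|^{\min(p,q)}$ from \eqref{eq:boundgrad}. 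By Lemma~\ref{lemma:kl}, $\Omega(x_0)$ is nonempty, compact, connected, with $F\equiv F_*$ on it and $\dist(x_k,\Omega(x_0))\to0$; this is precisely the setting in which the KL inequality \eqref{eq:kl_0} can be \emph{uniformized} to a single desingularizing function $\kappa$ valid on a tube $\{x:\dist(x,\Omega(x_0))\le\delta,\ F_*<F(x)<F_*+\epsilon\}$.

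First I would dispose of the trivial case: if $F(x_k)=F_*$ for some finite $k$, the descent \eqref{eq:dec} forces $x_{k+1}=x_k$ and the sequence is eventually constant, so convergence is immediate. Otherwise $F(x_k)>F_*$ for all $k$, and for all large $k$ the iterate $x_k$ lies in the tube above. Writing $\Delta_k:=F(x_k)-F_*$, the core chain is: concavity of $\kappa$ gives $\kappa(\Delta_k)-\kappa(\Delta_{k+1})\ge\kappa'(\Delta_k)(\Delta_k-\Delta_{k+1})$; the KL inequality \eqref{eq:kl_0} gives $\kappa'(\Delta_k)\ge 1/S_F(x_k)$; the index-shifted bound \eqref{eq:boundgrad} gives $S_F(x_k)\le C_p^q\,\|x_k-x_{k-1}\|^{\min(p,q)}$; and \eqref{eq:dec} lower-bounds $\Delta_k-\Delta_{k+1}$ by $a\,\|x_{k+1}-x_k\|^{(p+q+2)/2}$. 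Combining these yields the master inequality
\[
\|x_{k+1}-x_k\|^{\frac{p+q+2}{2}}\le\frac{C_p^q}{a}\,\big(\kappa(\Delta_k)-\kappa(\Delta_{k+1})\big)\,\|x_k-x_{k-1}\|^{\min(p,q)}.
\]

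From here the goal is to sum over $k$. Writing $d_k=\|x_{k+1}-x_k\|$ and $\delta_k=\kappa(\Delta_k)-\kappa(\Delta_{k+1})$ (so that $\sum_k\delta_k\le\kappa(\Delta_{k_0})<\infty$ telescopes), I would apply a weighted Young inequality to the product $\delta_k^{1/\alpha}d_{k-1}^{\beta/\alpha}$, with $\alpha=(p+q+2)/2$ and $\beta=\min(p,q)$, so as to split off a telescoping multiple of $\delta_k$ and a term proportional to $d_{k-1}$ that can be absorbed on the left after summation (using that $d_{k-1}$ is bounded and tends to $0$ by Theorem~\ref{th:1}). This is the classical finite-length argument, and it closes immediately when $p=q$: there $\alpha-\beta=1$, the exponents match the proximal-DCA template of \cite{ArtFle:18,BanBot:18}, and the increments appear to the first power, which is summable.

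The main obstacle is exactly the exponent mismatch when $p\ne q$: there $\alpha-\beta=(|p-q|+2)/2>1$, and the Young split that makes the $d_{k-1}$-term telescope leaves the increments $\delta_k$ raised to the sublinear power $1/(\alpha-\beta)<1$, whose summability is \emph{not} implied by $\sum_k\delta_k<\infty$ alone. To overcome this I would feed the KL estimate back into the descent, exactly as in the argument producing \eqref{eq:boundgrad}, to derive a one-dimensional recursion for the value gaps of the form $\Delta_k\ge\Delta_{k+1}+c\,\Delta_{k+1}^{\,\alpha/(r\min(p,q))}$ (in the semialgebraic case \eqref{eq:kl}); such a recursion controls the decay rate of $\Delta_k$, and hence of $\delta_k$, precisely enough to recover $\sum_k\delta_k^{1/(\alpha-\beta)}<\infty$ and therefore $\sum_k d_k<\infty$. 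Alternatively, and more robustly for a general desingularizing function $\kappa$, one can invoke a finite-length lemma tailored to descent/error pairs with mismatched exponents, adapting the abstract machinery of \cite{BolSab:14}. Once finite length is secured, $(x_k)_{k\ge0}$ is Cauchy, and Theorem~\ref{th:2} identifies its limit as a stationary point of $F$.
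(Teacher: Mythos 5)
Your overall strategy (finite length via sufficient decrease $+$ subgradient bound $+$ uniformized KL $+$ a Young split, then Cauchy, then Theorem~\ref{th:2} to identify the limit) is exactly the paper's, and your treatment of the case $p=q$ is complete. However, for $p\neq q$ there is a genuine gap, and it stems from a single suboptimal choice at the very start: you take the sufficient decrease in the \emph{combined} form \eqref{eq:dec}, i.e.\ $F(x_k)-F(x_{k+1})\ge a\,\|x_{k+1}-x_k\|^{(p+q+2)/2}$, which was obtained from \eqref{ineq:ab} via the AM--GM step $2(ab)^{1/2}\le a+b$. That step throws away information. The intermediate inequality \eqref{ineq:ab} gives the stronger \emph{per-term} decrease
\begin{equation*}
\frac{M_p - L_p^f}{(p+1)!}\,\|x_{k+1}-x_k\|^{p+1}\;\le\;F(x_k)-F(x_{k+1})
\end{equation*}
when $p\le q$ (and the analogous bound with exponent $q+1$ when $q<p$); this is precisely \eqref{eq:kl1} in the paper. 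With this choice the descent exponent is $\min(p,q)+1$ and the subgradient-bound exponent in \eqref{eq:boundgrad} is $\min(p,q)$, so they differ by exactly one for \emph{all} $p,q$, the master inequality becomes $\|x_{k+1}-x_k\|^{\min(p,q)+1}\le C\,\Delta_{k,k+1}\,\|x_k-x_{k-1}\|^{\min(p,q)}$, and the standard Young split $\|x_{k+1}-x_k\|\le\frac{1}{\min(p,q)+1}C\,\Delta_{k,k+1}+\frac{\min(p,q)}{\min(p,q)+1}\|x_k-x_{k-1}\|$ telescopes with a contraction factor strictly below one. No case distinction beyond $p\le q$ versus $p>q$ is needed, and no extra machinery is required.

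The workarounds you propose for the mismatched-exponent case do not close the gap as written. The recursion $\Delta_k\ge\Delta_{k+1}+c\,\Delta_{k+1}^{\alpha/(r\min(p,q))}$ is only available under the semialgebraic/\L{}ojasiewicz form \eqref{eq:kl}, whereas Theorem~\ref{th:3} is stated for a general desingularizing function $\kappa$ as in \eqref{eq:kl_0}; and even in the semialgebraic case, passing from a decay rate for $\Delta_k$ to summability of $\delta_k^{1/(\alpha-\beta)}$ with $1/(\alpha-\beta)<1$ is not carried out and is not automatic. The appeal to ``abstract machinery tailored to mismatched exponents'' is too vague to count as a proof. In short: your difficulty is an artifact of discarding the sharper decrease estimate, not an intrinsic feature of the problem; replace \eqref{eq:dec} by \eqref{eq:kl1} and your own argument for the $p=q$ case applies verbatim to all $p,q$.
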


\begin{proof}
In the light of \cite{AtBol:13,BolSab:14}, there is a general methodology  to prove that the whole sequence generated by a first order  algorithm converges to a critical point  under the  KL condition. We extend this methodology developed for first-order methods to our algorithm HO-DC. In our case we need to distinguish between two cases: $p \leq q$ and $p >q$. We prove only the first case  $p \leq q$, since the later case can be proved similarly. First, from the inequality \eqref{ineq:ab} the following  sufficient decrease property holds:
\begin{align}
\label{eq:kl1}
 \frac{M_p - L_p^f}{(p+1)!} \|x_{k+1} - x_k \|^{p+1}  \leq   F(x_k) - F(x_{k+1}).  
\end{align}   
Moreover, from inequality \eqref{eq:boundgrad}  we have the following  subgradient lower bound for the iterates gap (recall that we consider  $p \leq q$, hence $\min(p,q)=p$):
\begin{align}
\label{eq:kl2}
 S_F(x_{k+1}) \leq  C_p^q \cdot \|x_{k+1} - x_k\|^p.     
\end{align} 
Combining the previous two relations with the KL property of $F$,  we can show that the generated sequence $(x_k)_{k\geq 0}$ is a Cauchy sequence. Indeed, from the KL condition  \eqref{eq:kl_0} and Lemma \ref{lemma:kl} we have that there exists some $k_0$ such that:
\begin{equation}
\label{eq:kl3}
\kappa' (F(x_k) - F_*) \, C_p^q \, \|x_{k} - x_{k-1} \|^p  \overset{\eqref{eq:kl2}}{\geq} \! \kappa' (F(x_k) - F_*)\, S_{F}(x_k) \! \overset{\eqref{eq:kl_0}}{\geq}  1  \;\;\;    \forall k>k_0.
\end{equation}
From concavity of $\kappa$ we further get:
\begin{align*}
& \left( \kappa (F(x_k) - F_*) - \kappa (F(x_{k+1}) - F_*) \right) \, C_p^q \, \|x_{k} - x_{k-1} \|^p \\
& \geq  \kappa' (F(x_k) - F_*) (F(x_{k}) - F(x_{k+1})) \, C_p^q \, \|x_{k} - x_{k-1} \|^p \\
& \overset{\eqref{eq:kl3},\eqref{eq:kl1}}{\geq} \frac{M_p - L_p^f}{(p+1)!} \|x_{k+1} - x_k \|^{p+1} \quad   \forall k > k_0.
\end{align*}
Denoting $\Delta_{k,k+1} = \kappa (F(x_k) - F_*) - \kappa (F(x_{k+1}) - F_*)$ and $C= (C_{p}^q(p+1)!)/(M_p - L_p^f)$, the previous relation can be equivalently written as:
\[  C  \, \Delta_{k,k+1} \|x_{k} - x_{k-1} \|^p \geq  \|x_{k+1} - x_k \|^{p+1}  \quad   \forall k > k_0.  \]
Using the well-known relation that for any two positive scalars $a$ and $b$ we have $a^{\alpha_1} b^{\alpha_2} \leq \alpha_1 a + \alpha_2 b$ for any $\alpha_1,\alpha_2 \geq 0$ satisfying $\alpha_1+\alpha_2=1$ in the previous relation, we get:
\begin{align}
\label{eq:kl4}
 \|x_{k+1} - x_k \| &\leq (C \, \Delta_{k,k+1})^{\frac{1}{p+1}} \cdot \|x_{k} - x_{k-1} \|^{\frac{p}{p+1}} \nonumber \\
& \leq \frac{1}{p+1} C \, \Delta_{k,k+1} +  \frac{p}{p+1}  \|x_{k} - x_{k-1} \| \quad   \forall k > k_0.  
\end{align}
Summing \eqref{eq:kl4} from $k=k_0+1$ to some $K>k_0+1$ and using that the concave function $\kappa \geq 0$, we get:
\begin{align}
\label{eq:kl5}
\sum_{k=k_0+1}^K \|x_{k+1} - x_k \| & \leq C \, \kappa (F(x_{k_0+1}) - F_*)  +  p  \|x_{k_0+1} - x_{k_0} \|. 
\end{align}
The other case $p>q$ can be proved similarly, deriving a relation of the form:
\begin{align}
\label{eq:kl55}
\sum_{k=k_0+1}^K \|x_{k+1} - x_k \| & \leq C \, \kappa (F(x_{k_0+1}) - F_*)  +  q  \|x_{k_0+1} - x_{k_0} \|. 
\end{align}
Taking the limit as $K \to \infty$ we get  that the sequence $(x_k)_{k\geq 0}$ has finite length:
\[   \sum_{k=1}^\infty \|x_{k+1} - x_k \| < \infty.  \]
It is then clear that this implies that the sequence $(x_k)_{k\geq 0}$ is a Cauchy sequence and therefore it is convergent to some point $x^*$. Finally, Theorem \ref{th:2} shows that any limit point of $(x_k)_{k\geq 0}$   is a stationary point of the objective function $F$, hence $0 \in \partial F(x^*)$. This proves the statement of the theorem. 
\qed
\end{proof}

\noindent An important case of application of Theorem \ref{th:3} is when the objective function $F$ is semi-algebraic, i.e., it satisfies  the KL condition \eqref{eq:kl}. In this case we can derive explicit convergence rates for the sequence $(x_k)_{k\geq 0}$. 

\begin{theorem}
\label{th:4}
Let Assumption \ref{as:1} hold and   $(x_k)_{k\geq 0}$ be a sequence generated by  HO-DC algorithm. Assume also that $F$ is semi-algebraic (i.e., it satisfies  the KL condition \eqref{eq:kl} for some $r>1$). Then, the whole sequence $(x_k)_{k\geq 0}$ converges to a stationary point $x^*$ of the objective function $F$ with the following rates:\\  
(i) If $(r-1)\min(p,q) <1$ the convergence is sublinear. \\
(ii) If $(r-1)\min(p,q) \geq 1$ the convergence is linear.
\end{theorem}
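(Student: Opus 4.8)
The convergence of the whole sequence to a stationary point $x^*$ is already guaranteed by Theorem \ref{th:3}, so the plan is to upgrade that qualitative statement into quantitative rates by exploiting the explicit form of the desingularizing function $\kappa(t)=\sigma_r^{1/r}\frac{r}{r-1}t^{(r-1)/r}$ available in the semi-algebraic case. I treat only $p\leq q$ (so $\min(p,q)=p$); the case $p>q$ is symmetric with $p$ replaced by $q$. Throughout set $\Delta_k:=F(x_k)-F_*$ and $d_k:=\|x_{k+1}-x_k\|$. If $\Delta_{k}=0$ for some $k$, the sufficient decrease \eqref{eq:kl1} forces $d_j=0$ for all $j\geq k$, i.e.\ finite termination, and both rates hold trivially; hence I may assume $\Delta_k>0$ for all $k$. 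Since $x_k\to x^*$ and $\Delta_k\downarrow 0$, the KL inequality \eqref{eq:kl} applies at $x_k$ for all $k$ large enough.

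First I would derive a scalar recursion for the function values. Combining the subgradient lower bound \eqref{eq:kl2}, $S_F(x_k)\leq C_p^q d_{k-1}^{p}$, with the semi-algebraic KL inequality \eqref{eq:kl}, $\Delta_k\leq\sigma_r S_F(x_k)^r$, gives $d_{k-1}\geq b\,\Delta_k^{1/(rp)}$ for a constant $b>0$; plugging this into the sufficient decrease \eqref{eq:kl1}, $\Delta_k-\Delta_{k+1}\geq a\,d_k^{p+1}$ with $a=(M_p-L_p^f)/(p+1)!$, yields
\begin{align}
\label{eq:plan_rec}
\Delta_k-\Delta_{k+1}\;\geq\;\hat a\,\Delta_{k+1}^{\gamma},\qquad \gamma:=\frac{p+1}{rp},\quad \hat a:=a\,b^{p+1}.
\end{align}
A standard lemma on nonnegative nonincreasing scalar sequences satisfying \eqref{eq:plan_rec} then gives the dichotomy: if $\gamma\leq 1$, i.e.\ $(r-1)p\geq 1$, then $\Delta_{k+1}\leq(1+\hat a)^{-1}\Delta_k$ for all large $k$ (geometric decay), whereas if $\gamma>1$, i.e.\ $(r-1)p<1$, then $\Delta_k=\mathcal{O}\!\left(k^{-1/(\gamma-1)}\right)$. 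The threshold $\gamma\lessgtr 1$ is \emph{exactly} the condition $(r-1)\min(p,q)\gtrless 1$ in the statement, which is the reason the two regimes appear.

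It remains to transfer the rate of $\Delta_k$ to the iterates. For this I would use the tail length $R_k:=\sum_{i\geq k}d_i$, which dominates $\|x_k-x^*\|$. Summing the per-step inequality \eqref{eq:kl4} from $i=k$ to $\infty$ (exactly as \eqref{eq:kl5} was obtained, but over the whole tail) and using $\sum_{i\geq k}d_{i-1}=d_{k-1}+R_k$, one gets the contraction-with-forcing recursion
\begin{align}
\label{eq:plan_tail}
R_k\;\leq\;\lambda\,R_{k-1}+\frac{C}{1+p}\,\kappa(\Delta_k),\qquad \lambda:=\frac{p}{1+p}\in(0,1),
\end{align}
with $C$ as in the proof of Theorem \ref{th:3}. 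Since $\kappa(\Delta_k)$ is a positive power of $\Delta_k$, the rate of $\Delta_k$ above propagates to $\kappa(\Delta_k)$: geometric in the linear regime and of order $k^{-\eta}$ with $\eta=\frac{r-1}{r(\gamma-1)}$ in the sublinear regime. Unrolling \eqref{eq:plan_tail} as $R_k\leq\lambda^{k-k_0}R_{k_0}+\frac{C}{1+p}\sum_{j=0}^{k-k_0-1}\lambda^{j}\kappa(\Delta_{k-j})$ and estimating the convolution then yields $R_k=\mathcal{O}(\mu^k)$ for some $\mu\in(0,1)$ in the linear case, and $R_k=\mathcal{O}(k^{-\eta})$ in the sublinear case; together with $\|x_k-x^*\|\leq R_k$ this proves (i) and (ii).

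The step I expect to be delicate is the last one, namely the passage from the function-value rate to the iterate rate through \eqref{eq:plan_tail}. A naive bound $d_k\leq(\Delta_k/a)^{1/(p+1)}$ followed by direct summation is too lossy in the sublinear regime, where the resulting tail series may diverge even though $(x_k)_{k\geq 0}$ is known to converge; routing the argument through the contractive recursion \eqref{eq:plan_tail} (whose forcing term already carries the correct decay) is what makes both the geometric and the polynomial convolutions go through, and in particular it absorbs the boundary case $(r-1)p=1$ without requiring any smallness of the constants. The case $p>q$ is identical after interchanging the roles of $p$ and $q$, replacing $p$ by $\min(p,q)$ throughout.
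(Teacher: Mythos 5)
Your proposal is correct, and it reaches the same exponents as the paper (your $\eta=\frac{r-1}{r(\gamma-1)}$ simplifies to $\frac{(r-1)\min(p,q)}{1-(r-1)\min(p,q)}$, which is exactly the paper's sublinear rate), but the route is genuinely different. The paper works with a \emph{single} recurrence in the tail sums $\zeta_k=\sum_{j\geq k}\|x_{j+1}-x_j\|$: starting from \eqref{eq:kl5}--\eqref{eq:kl55} it bounds $\kappa(F(x_k)-F_*)$ by a power of $S_F(x_k)$ via the semi-algebraic KL inequality \eqref{eq:kl}, then by a power of $\|x_k-x_{k-1}\|=\zeta_{k-1}-\zeta_k$ via \eqref{eq:boundgrad}, arriving at the closed recursion $\zeta_k\leq \bar C(\zeta_{k-1}-\zeta_k)^{(r-1)\min(p,q)}+\tilde C(\zeta_{k-1}-\zeta_k)$, whose two regimes are then read off from the cited literature. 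You instead run a two-stage argument: first a scalar recursion $\Delta_k-\Delta_{k+1}\geq\hat a\,\Delta_{k+1}^{\gamma}$ on the function values (obtained by chaining \eqref{eq:kl2}, \eqref{eq:kl} and \eqref{eq:kl1}), whose threshold $\gamma\lessgtr 1$ correctly reproduces the dichotomy $(r-1)\min(p,q)\gtrless 1$, and then a transfer to the iterates through the contraction-with-forcing recursion $R_k\leq\frac{p}{p+1}R_{k-1}+\frac{C}{p+1}\kappa(\Delta_k)$ and a convolution estimate. Your version costs an extra (routine but nontrivial) convolution argument that the paper avoids, but it buys an explicit convergence rate for the objective values $F(x_k)-F_*$ as a by-product, which the paper's recurrence does not directly provide; both correctly absorb the boundary case $(r-1)\min(p,q)=1$ into the linear regime. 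The only caveat, inherited from the statement itself rather than introduced by you, is that invoking Theorem \ref{th:3} (and the KL inequality along the tail of the sequence) implicitly uses boundedness of $(x_k)_{k\geq 0}$, which is assumed there but not restated here.
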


\begin{proof}
From the proof of Theorem \ref{th:3} we know that $\zeta_k=\sum_{j=k}^\infty \|x_{j+1} - x_j \|$ is finite and $\zeta_k \to 0$ as $k \to \infty$. Since by the triangle inequality  $\|x_k - x^* \| \leq \zeta_k$, the rate of convergence of $x_k$ to $x^*$ can be inferred from the convergence rate of $\zeta_k$ to 0.
From \eqref{eq:kl5} and \eqref{eq:kl55} the following relation can be easily derived:
\begin{align*}
\sum_{j=k}^\infty \|x_{j+1} - x_j \| & \leq C \, \kappa (F(x_{k}) - F_*)  +  \min(p,q)  \|x_{k} - x_{k-1} \|, 
\end{align*}
which, using the expression $\kappa (t) = \sigma_r^{\frac{1}{r}} \frac{r}{r-1} t^{\frac{r-1}{r}}$, can be written compactly as:
\begin{align}
\label{eq:kl6}
\zeta_k & \leq C \, \kappa (F(x_{k}) - F_*)   +  \min(p,q) (\zeta_{k-1} - \zeta_{k}) \nonumber  \\
&= C \, \sigma_r^{\frac{1}{r}} \frac{r}{r-1} (F(x_{k}) - F_*)^{\frac{r-1}{r}}   +  \min(p,q) (\zeta_{k-1} - \zeta_{k}) \nonumber \\
& \overset{\eqref{eq:kl}}{\leq} C \, \sigma_r \frac{r}{r-1} (S_F(x_k))^{r-1} +  \min(p,q) (\zeta_{k-1} - \zeta_{k}) \nonumber \\ 
& \overset{\eqref{eq:boundgrad}}{\leq} C \, \sigma_r \frac{r}{r-1} \left( C_p^q \|x_k - x_{k_1}\|^{\min(p,q)}\right)^{r-1} +  \min(p,q) (\zeta_{k-1} - \zeta_{k}) \nonumber \\ 
&= C \, \sigma_r \frac{r}{r-1} \left( C_p^q \right)^{r-1}  (\zeta_{k-1} - \zeta_{k})^{(r-1)\min(p,q)} +  \min(p,q) (\zeta_{k-1} - \zeta_{k}) \nonumber \\ 
& =  \bar{C}  (\zeta_{k-1} - \zeta_{k})^{(r-1)\min(p,q)} +  \tilde{C} (\zeta_{k-1} - \zeta_{k}),
\end{align}
where $ \bar{C} = C \, \sigma_r \frac{r}{r-1} \left( C_p^q \right)^{r-1}$ and $\tilde{C} = \min(p,q)$. This type of recurrence  has been analysed extensively in the literature, see e.g., \cite{AtBol:13,AtBol:09,BolSab:14,NabNec:22}. Since $\zeta_k \to 0$ as $k \to \infty$ we can distinguish two cases:\\

\noindent \textit{Case 1}: If $(r-1)\min(p,q) <1$, then for $k$ sufficiently large and appropriate constant $C_1>0$, we can derive from \eqref{eq:kl6} the following simpler recurrence:
\[   \zeta_k  \leq C_1 (\zeta_{k-1} - \zeta_{k})^{(r-1)\min(p,q)} \]
which yields sublinear convergence rate of order (see e.g., Theorem 5 in \cite{AtBol:09})
\[  \zeta_k \leq \frac{c_1}{k^\frac{(r-1)\min(p,q)}{1 - (r-1)\min(p,q)}},  \]
with $c_1$ being a positive constant. \\

\noindent \textit{Case 2}: If $(r-1)\min(p,q) \geq 1$, then for $k$ sufficiently large and appropriate constant $C_2>0$, we can derive from \eqref{eq:kl6} the following simpler recurrence:
\[   \zeta_k  \leq C_2 (\zeta_{k-1} - \zeta_{k})\]
which yields linear convergence rate:
\[  \zeta_k \leq \frac{C_2}{1 + C_2} \zeta_{k-1},   \]
hence proving the statements of the theorem. 
\qed
\end{proof}

\noindent  The  convergence rates from Theorem \ref{th:4}  recover the complexity bounds  for (proximal) DCA variants (i.e., $p=q=1$)  derived in the literature, see e.g.,  \cite{LePha:18,LeHuy:18,TaoLe:97,AnNam:17,BanBot:18}.


\section{Adaptive HO-DC algorithm}
In some practical applications it may be difficult to estimate the Lipschitz constants $L_p^f$ and $L_q^g$ and thus difficult to choose  the constants $M_p$ and $M_q$ in HO-DC algorithm.  Hence, in this section, we propose an adaptive variant of HO-DC algorithm which is based on a line search procedure to choose these parameters. Since the surrogate model $m_p^q(\cdot;x)$ depends on the given constants $M_p$ and $M_q$, below we consider the following notation for the approximation model $m_p^q(y;x): = m_{M_p}^{M_q}(y;x)$ in order to reflect better this dependence. Note that the previous convergence results  are derived under Assumption \ref{as:1} and the sequence $(x_k)_{k\geq 0}$ generated by HO-DC algorithm having the following properties:
\begin{align}
\label{eq:adp0}
& x_{k+1} \; \text{is a stationary point of subproblem} \; \min_y m_{M_p}^{M_q}(y;x_k)
\end{align}

\noindent and a descent relation of the form (see \eqref{eq:dec}) 

\begin{align}\label{eq:adp}
F(x_{k+1}) \leq F(x_k) - \gamma \|x_{k+1} - x_k\|^{\frac{p+q+2}{2}},
\end{align}
where $\gamma>0$ is a given constant. Hence, in the following we propose an \textit{adaptive} higher-order DC algorithm, called (AHO-DC):   
\begin{center}
\noindent\fbox{%
\parbox{11.5cm}{%
\textbf{Algorithm AHO-DC}
\begin{enumerate}
\item[] Choose $x_{0}\in\text{dom}\;\psi$, $i=0$, $\gamma>0$ and $M_p^0, M_q^0 > 0$ 
\item[] For $k\geq 0$ \text{do}:
\item[] Step 1: compute $x_{k+1}$  satisfying  \eqref{eq:adp0} with $M_p = 2^{i}M_p^k$, $M_q = 2^{i}M_q^k$
\hspace*{1cm} If \eqref{eq:adp} \text{holds}, then go to Step 3
\item[] Step 2: else set $i=i+1$ and go to Step 1
\item[] Step 3: set $k=k+1$, $M_p^{k+1} = 2^{i-1}M_p^k$,  $M_q^{k+1} = 2^{i-1}M_q^k$ and $i=0$.
\end{enumerate}
}%
}
\end{center}

\medskip 

\noindent Note that step 1 in AHO-DC algorithm can be seen as a line search procedure: that is at each step $k \geq 0$ we choose $M_p^k$ and $M_q^k$, then build $m_{M_p^k}^{M_q^k}(y;x_k)$ and compute $x_{k+1}$ satisfying \eqref{eq:adp0}. If \eqref{eq:adp} doesn't hold, then we increase  $M_p^k \leftarrow  2\cdot M_p^k$,  $M_p^k \leftarrow  2\cdot M_q^k$ and recompute $m_{M_p^k}^{M_q^k}(y;x_k)$ using the new $M_p^k$  and $M_q^k$. We repeat this process until  condition \eqref{eq:adp} is satisfied. Note that this line search procedure finishes in a finite number of steps. Indeed, if $M_p^k \geq \gamma + L_p^{f}$ and $M_q^k \geq \gamma + L_q^{g}$, then from Theorem \ref{th:1} it follows that \eqref{eq:adp} holds. However, in practice the descent condition \eqref{eq:adp} may hold for much smaller values of $M_p^k$  and $M_q^k$ than the theory predicts.   Hence, using the same convergence analysis as in the previous sections, we can derive similar convergence rates as in Theorems  \ref{th:2}, \ref{th:3} and \ref{th:4} for the sequence $(x_k)_{k\geq 0}$ generated by AHO-DC algorithm.

\section{Conclusions}
In this paper we have developed (adaptive) higher-order algorithms for minimizing difference of convex functions (DC) with the first term in composite form. We have also showed the implementability of our algorithmic scheme, in particular, we have proposed the first cubic regularized Newton type algorithm in the context of DC programming.  Global convergence results and convergent rates were established under general assumptions but also under Kurdyka-Lojasiewicz inequality.


\end{document}